\documentclass[12pt]{iopart}
\usepackage{iopams}
\usepackage{algorithm}
\usepackage{algpseudocode}
\usepackage{subfigure}

\expandafter\let\csname equation*\endcsname\relax

\expandafter\let\csname endequation*\endcsname\relax

\usepackage{amsmath}

\usepackage{graphicx} % Required for inserting images
\usepackage{amsthm}
\usepackage{tikz}
\newtheorem*{remark}{Remark}
\newtheorem{theorem}{Theorem}
\newtheorem{lemma}[theorem]{Lemma}
\usepackage{xcolor}

\begin{document}

\title[EIT with minimal measurements]{Determination of a Small Elliptical Anomaly in Electrical Impedance Tomography using Minimal Measurements}

\author{Gaoming Chen$^1$, Fadil Santosa$^2$, Aseel Titi$^2$}
\address{$^1$School of Mathematics, School of Mathematics, Renming University, Beijing, China 100872}
\address{$^2$Department of Applied Mathematics and Statistics, Johns Hopkins University, Baltimore, MD 21218, USA}
\ead{fsantos9@jhu.edu}

\vspace{10pt}
\begin{indented}
\item[]March 2024
\end{indented}

%\maketitle

\begin{abstract}
    We consider the problem of determining a small elliptical conductivity anomaly in a unit disc from boundary measurements. The conductivity of the anomaly is assumed to be a small perturbation from the constant background. A measurement of voltage across two point-electrodes on the boundary through which a constant current is passed. We further assume the limiting case when the distance between two electrodes go to zero, creating a dipole field. We show that three such measurements suffice to locate the anomaly size and location inside the disc. Two further measurements are needed to obtain the aspect ratio and the orientation of the ellipse. The investigation includes the studies of the stability of the inverse problem and optimal experiment design.
\end{abstract}

\maketitle

\section{Introduction}

This work is motivated by the work of Isakov-Titi \cite{isakov-titi-1, isakov-titi-2} which considers the gravimetry inverse problems from minimal measurements. In inverse gravimetry, the goal is to determine the density of a body from gravitational measurements. In their work, the authors break away from classical gravimetry where data are available over a curve or surface. Instead they consider cases where data are available only at a few points. However, instead of reconstructing density distribution, their work focus on recovering parameters of simple geometrical shapes.

In electrical impedance tomography, the objective is to determine the conductivity distribution of a body from boundary measurements. Often called the Calderon problem \cite{calderon}, data typically consists of measurements of boundary voltage to applied currents. The mathematical problem is to find the conductivity of the medium from the Dirichlet-to-Neumann map. In practice, one can place a finite number of electrodes on the boundary of the body, and thus, only capture a finite sampling of the D-to-N map. Nevertheless, with sufficient number of electrodes, one can solve the discretized version of the problem and estimate the interior conductivity from these measurements. The reader is referred to Borcea \cite{borcea} for a comprehensive review of electrical impedance tomography.

Our interest is to see what information about the unknown conductivity can be recovered from a few measurements on the boundary. So, instead of having many electrodes on the boundary, we envision using a pair of electrodes which are moved to a few locations on the boundary. Furthermore, instead of attempting to obtain the conductivity distribution in the body, we seek to determine the location and shape of a conductivity anomaly. There are previous work in this direction, including determination of polygonal anomalies \cite{friedman-isakov}, cracks \cite{friedman-vogelius}, circular anomalies \cite{Kwon-Yoon-Seo-Woo}, and elliptical anomalies \cite{kara-lesnic}. In all these, it is assumed that measurements are available on the entire boundary. 

In this work, we assume that a dipole measurement can be made. By a dipole source, we mean the limiting boundary current of a source-sink pair when the distance between them goes to zero. For voltage measurement, we mean the limiting voltage between the same electrode. We provide a mathematical description of what we mean by measurement in Section 2. We assume that the anomaly in question is elliptical in shape. In the same section, the reader can find a description of the linearized forward map used in this work. In Section 3, we address the inverse problem of determining the anomaly location and size. We provide a direct method to solve the problem, and address uniqueness and stability. The question of what measurements to make, that is, optimal experiment design, is addressed in Section 4. We describe a Bayesian approach and a deterministic approach to find the optimal experiment design. We compare these two approaches in numerical simulations. Section 5 is devoted to the case where we attempt to fully characterize the elliptical anomaly. We find that the problem is unstable. Only the location and size of the anomaly can be robustly estimated. The paper concludes with a discussion section.

\section{Linearized forward map}
We begin by describing the model-to-data relationship in this inverse problem. We are interested in determining the geometric properties of a conductivity anomaly $D$ inside a domain $\Omega$ which is a unit circle. We assume that the conductivity of the medium is given by
\[
\gamma(x) = 1 + \gamma \chi_D (x),
\]
where $\gamma\ll 1$ is a small constant, and $\chi_D(x)$ is a characteristic function supported on $D$. We further assume that $D \Subset \Omega$. The voltage potential caused by external sources is denoted by $U(x)$.  Under linearization, $U(x) = U_0(x) + \gamma U_1(x)$, and the components satisfy
\begin{eqnarray}
    \Delta U_0 &= 0 \\ \label{background}
    \Delta U_1 &= - \nabla \cdot \chi_D \nabla U_0. 
\end{eqnarray}
The so-called background potential $U_0(x)$ is caused by applied currents.  The classical Calderon identity states that for a harmonic function $v$, the following holds
\begin{equation}\label{calderon}
\int_{\partial \Omega} U_1 \frac{\partial v}{\partial n} ds = - \int_D \nabla v \cdot \nabla U_0 dx.
\end{equation}

\begin{figure}
\begin{center}
\begin{tikzpicture}
\draw[line width=1pt] (0,0) circle (2);
\draw[->] (0,0) -- (2.5,0);
\draw[->] (0,0) -- (0,2.5);
\draw[dotted] (0,0) -- (1.8794,0.6840);
\draw[dotted] (0,0) -- (0.6840,1.894);
\draw[red,fill=red] (1.8794,0.6840) circle (.5ex);
\draw[blue,fill=blue] (0.6840,1.8794) circle (.5ex);
\draw (2.2,0.7) node[]{$A$};
\draw (0.6,2.2) node[]{$B$};
\draw (1.1,0.25) node[]{$\phi$};
\draw (0.7,0.65) node[]{$\Delta \phi$};
\draw (-0.6,0.7) node[]{$D$};
\draw (2.5,-0.4) node[]{$x_1$};
\draw (-0.4,2.5) node[]{$x_2$};
\filldraw[cyan, opacity = 0.5, rotate=45] (0.2,0.8) ellipse (0.6 and 0.37); 
\end{tikzpicture}
\end{center}
    \caption{In this schematic, for the case of a source-sink pair, $A$ is the current source and $B$ is the current sink. $A$ is located at coordinates $(\cos\phi,\sin\phi)$ and $B$ is at $(\cos(\phi+\Delta \phi),\sin(\phi+\Delta\phi))$. The conductivity anomaly is represented by the ellipse $D$.  The dipole source is the limiting source-sink pair when $\Delta\phi\rightarrow 0$ (modulo scaling). We say that in this case, the dipole is located at $A$.  The inverse problem is to determine the geometric properties of the anomaly from measurements of the tangential derivative of the perturbational voltage at $A$.  To obtain sufficient information, measurements will be done for several angles $\phi_j$.}
    \label{schematic}
\end{figure}
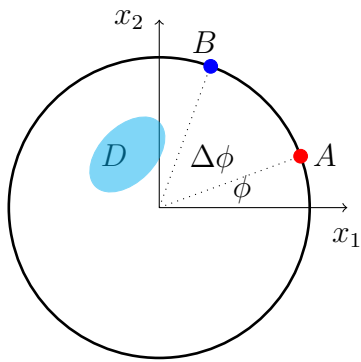

We use polar coordinate $(r,\theta)$; our domain is the unit circle $r\leq 1$. We start by considering a current source-sink pair on the boundary $r=1$ as in Figure \ref{schematic}. A current source is located at $A = (\cos\phi,\sin\phi)$ and a current sink at $B = (\cos(\phi+\Delta \phi),\sin(\phi+\Delta\phi))$. For measurement, the voltage difference between $A$ and $B$ is taken.  In this case, the boundary condition for $U_0$ is
\begin{equation}
    \label{sspair}
    \left. \frac{\partial U_0}{\partial r}\right|_{r=1} = \delta(\theta - \phi) - \delta(\theta - \phi-\Delta\phi).
\end{equation}
In this work, we will focus on dipole sources. This corresponds to the source-sink pair when we divide the right-hand side of (\ref{sspair}) by $\Delta\phi$ and set it to zero.  For a dipole source located at $A = (\cos\phi,\sin\phi)$, the boundary condition for $U_0$ is
\begin{equation}\label{dipole}
\left. \frac{\partial U_0}{\partial r}\right|_{r=1} = -\pi \delta'(\theta - \phi).
\end{equation}
For measurement, the tangential derivative of the voltage is measured at $A$. Thus, when the dipole is at angle $\phi$, the measured data is $\frac{\partial U_1}{\partial\theta}(1,\phi)$.
We set $\int_{\partial \Omega} U_0 ds = 0$ for uniqueness.

The background voltage potential satisfying (\ref{dipole}) is given by \cite{sanvog2}
\begin{equation}
    \label{dipolepot}
    U_0(x_1,x_2) = \frac{-x_1\sin\phi+x_2\cos\phi}{(x_1-\cos\phi)^2+(x_2-\sin\phi)^2}.
\end{equation}
In (\ref{calderon}), we also choose $v=U_0$ so that the left-hand side is the tangential derivative of $U_1$ at the source location, namely,
\[
\frac{1}{2}\frac{\partial U_1}{\partial \theta} (1,\phi),
\]
where the tangential derivative is calculated using polar coordinates. Identifying the left-hand side as data, we identify the forward map, which depends on the measurement angle $\phi$, as
\begin{equation}
    \label{IP0}
    I(D;\phi) = \int_D |\nabla U_0|^2 dx. 
\end{equation}
The inverse problem is to find $D$, properly parameterized, from measurement of the tangential derivative of $U_1$ at $(1,\phi)$ for several $\phi_i$.  To be more precise, we wish to find the $n$ parameters describing $D$ by solving the equation
\begin{equation}
    I(D; \phi_i) = g_i, \;\;\mbox{for} \;\; i=1,2,\dots,n.
\end{equation}

\begin{remark} In practice, one could generate data with a source-sink pair very near each other; i.e., $\Delta \phi$ small in Figure \ref{schematic}. The measured voltage difference can be viewed as impedance between points $A$ and $B$ using Ohm's law. Such resistance could be converted back into voltage difference since $\gamma$ is known.
\end{remark}
We next calculate the integrand in (\ref{IP0}). By direct calculation, we can show that
\begin{equation}\label{kernel2}
|\nabla U_0|^2 =\frac{1}{[(x_1-\cos\phi)^2 + (x_2-\sin\phi)^2]^2 } .  
\end{equation}
The dipole potential field $U_0(x_1,x_2)$ in (\ref{dipolepot}) and its corresponding kernel $|\nabla U_0|^2$ in \eqref{kernel2} are visualized in Figure 2.
\begin{figure}
    \centering
    \includegraphics[width=2.3in]{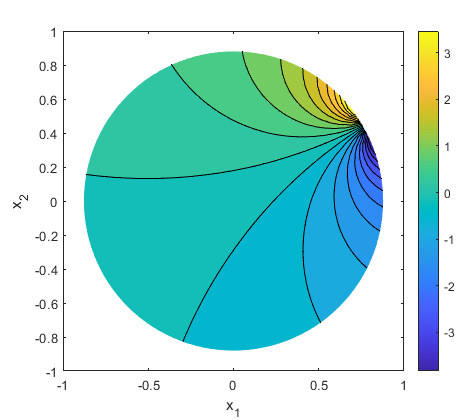}
    \includegraphics[width=2.2in]{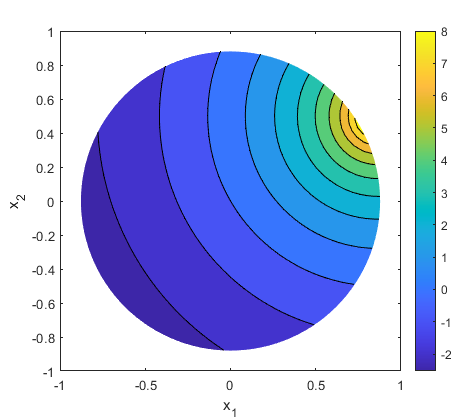}
    \caption{The dipole potential $U_0$ (Left) and the log-plot of the corresponding kernel $|\nabla U_0|^2$ (Right) for a dipole located at $(1,\pi/6)$.}
    \label{fig:pot-kernel}
\end{figure}
%For the source-sink pair, we first calculate the gradient of $U_0(x)$ in \eqref{sspairpot}. Writing $\phi'=\phi+\Delta\phi$, we have
%\begin{align*}
%\frac{\partial U_0}{\partial x_1} &=
%\frac{2(x_1-\cos\phi')}{(x_1-\cos\phi')^2+(x_2-\sin\phi')^2} - \frac{2(x_1-\cos\phi)}{(x_1-\cos\phi)^2+(x_2-\sin\phi)^2}, \\
%\frac{\partial U_0}{\partial x_2} &=
%\frac{2(x_2-\sin\phi')}{(x_1-\cos\phi')^2+(x_2-\sin\phi')^2} - \frac{2(x_2-\sin\phi)}{(x_1-\cos\phi)^2+(x_2-\sin\phi)^2} .
%\end{align*}
%Proceeding, we get
%\begin{align} \label{kernel1}
%|\nabla & U_0|^2 = \\
%& \frac{8(1-\cos\Delta\phi)}{[(x_1-\cos\phi)^2+(x_2-\sin\phi)^2][(x_1-\cos(\phi+\Delta\phi))^2+(x_2-\sin(\phi+\Delta\phi)^2]} \nonumber
%\end{align}
%We observe that the expression for $|\nabla U_0|^2$ is much more complicated for the case of a source-sink pair. Moreover, since our goal is to make as few simple measurements as possible, the idea of using the dipole source is attractive as we only have a single parameter $\phi$ which locates the source. In the source-sink pair, we need two parameters to describe a measurement, namely $\phi$ and $\Delta\phi$.  With this in mind, we made a decision to focus our attention to the case of dipole sources.

For the remainder of this paper, we will assume that $D$ is a small elliptical domain. It is parameterized by the location of the center, $(b_1,b_2)$, the semi-major and semi-minor axes, $(a_1,a_2)$, and the orientation $\xi$. The area of the ellipse is $A=\pi a_1 a_2$ which is small by assumption. To obtain the forward map (\ref{IP0}), we simply integrate $|\nabla U_0|^2$ for a given measurement angle $\phi$ over the ellipse.

We let $K=|\nabla U_0|^2$. For a source located at $(\cos\phi,\sin\phi)$, we have
\begin{equation}\label{Kdef}
K(x_1,x_2) = \frac{1}{[(x_1-\cos\phi)^2+(x_2-\sin\phi)^2]^2}.
\end{equation}
From this, we can calculate
\begin{eqnarray*}
&\frac{\partial K}{\partial x_1} = -\frac{4(x_1-\cos\phi)}{[(x_1-\cos\phi)^2+(x_2-\sin\phi)^2]^3}, \\
&\frac{\partial K}{\partial x_2} = -\frac{4(x_2-\sin\phi)}{[(x_1-\cos\phi)^2+(x_2-\sin\phi)^2]^3},
\end{eqnarray*}
We can calculate second derivatives
\begin{eqnarray*}
K_{11}(x_1,x_2) = & \frac{20(x_1-\cos\phi)^2-4(x_2-\sin\phi)^2}{[(x_1-\cos\phi)^2+(x_2-\sin\phi)^2]^4},\\
K_{12}(x_1,x_2) = & \frac{24(x_1-\cos\phi)(x_2-\sin\phi)}{[(x_1-\cos\phi)^2+(x_2-\sin\phi)^2]^4}, \\
K_{22}(x_1,x_2) = & \frac{-4(x_1-\cos\phi)^2+20(x_2-\sin\phi)^2}{[(x_1-\cos\phi)^2+(x_2-\sin\phi)^2]^4} .
\end{eqnarray*}
In order to integrate $K(x_1,x_2)$ over the ellipse, we use the quadratic approximation of $K(x_1,x_2)$ at $(b_1,b_2)$
\begin{eqnarray} \label{twotaylor}
& \hspace{-2cm} K(x_1,x_2) \approx  K(b_1,b_2) + K_1(b_1,b_2)(x_1-b_1) + K_2(b_1,b_2)(x_2-b_2) \\
& \hspace{-1.5cm} + \frac{1}{2} [ K_{11}(b_1,b_2)(x_1-b_1)^2 +  2 K_{12}(b_1,b_2)(x_1-b_1)(x_2-b_2) + K_{22}(b_1,b_2)(x_2-b_2)^2 ]. \nonumber
\end{eqnarray}

Now, we set up the $(X_1,X_2)$ coordinates so that $X_1$ is aligned with the major axis of the ellipse, and the origin is at $(b_1,b_2)$. Therefore
\begin{eqnarray*}
    x_1-b_1 &= X_1 \cos \xi - X_2 \sin \xi, \\
    x_2-b_2 &= X_1 \sin \xi + X_2 \cos \xi.
\end{eqnarray*}
Let the orthogonal transformation $U$ be given by
\[
U = \left[ \begin{array}{cc}
         \cos\xi & -\sin\xi \\
         \sin\xi & \cos\xi 
         \end{array} \right]  .
\]
Define
\[
M = U^T \left[ \begin{array}{cc}
         K_{11}(b_1,b_2) & K_{12}(b_1,b_2) \\
         K_{12}(b_1,b_2) & K_{22}(b_1,b_2)
         \end{array} \right] U.
\]
Then in (\ref{twotaylor}) becomes
\begin{eqnarray} \label{twotaylor2}
 \hspace{-1cm} K(X_1,X_2) &\approx  K(b_1,b_2) + K_1(b_1,b_2)(X_1\cos\xi-X_2\sin\xi)  \\
& + K_2(b_1,b_2)(X_1\sin\xi+X_2\cos\xi)
 + \frac{1}{2}\left[ M_{11}X_1^2 + 2 M_{12}X_1 X_2 + M_{22}X_2^2 \right]. \nonumber
\end{eqnarray}
We find 
\begin{eqnarray*}
    M_{11} &= K_{11} \cos^2\xi + 2 K_{12} \cos\xi\sin\xi + K_{22} \sin^2\xi, \\
    M_{22} &= K_{11} \sin^2\xi - 2 K_{12} \cos\xi\sin\xi + K_{22} \cos^2\xi.
\end{eqnarray*}

Integrating over the ellipse, we get
\begin{equation}\label{paramtodata}
I(b_1,b_2,a_1,a_2,\xi;\phi) = K(b_1,b_2)(\pi a_1 a_2) 
+ M_{11}\frac{\pi}{8}a_1^3 a_2 + M_{22}\frac{\pi}{8}a_1 a_2^3.
\end{equation}
One can check that in the case of a circle, i.e., $a_1=a_2$, the last two terms become
\[ 
\frac{\pi}{8}a_1^4 (M_{11}+M_{22})=\frac{\pi}{8}a_1^4 \left( (K_{xx}(b_1,b_2)+K_{yy}(b_1,b_2) \right),
\]
and is independent of the orientation $\xi$.

We can separate the terms that depend on the center $(b_1,b_2)$ and those that involve the ellipse $t_1=a_1(\cos\xi,\sin\xi)^T$ and $t_2=a_2(-\sin\xi,\cos\xi)^T$.  We have
\begin{equation} \label{paramtodata2}
I(b_1,b_2,a_1,a_2,\xi;\phi) = \pi a_1 a_2  \left[ 
    K(b_1,b_2;\phi)  + \frac{1}{8} t_1^T \mathcal{K}(b_1,b_2;\phi) t_1 
                    +  \frac{1}{8} t_2^T \mathcal{K}(b_1,b_2;\phi) t_2 \right] ,
\end{equation}
where
\[
\mathcal{K}(b_1,b_2;\phi) = \left[ \begin{array}{cc}
         K_{xx}(b_1,b_2;\phi) & K_{xy}(b_1,b_2;\phi) \\
         K_{xy}(b_1,b_2;\phi) & K_{yy}(b_1,b_2;\phi)
         \end{array} \right]  .
\]

Having derived the forward map, we can now pose the inverse problem. We seek to find $(b_1,b_2)$, $(a_1,a_2)$, and $\xi$, from the equations
\begin{equation}
    I(b_1,b_2,a_1,a_2,\xi;\phi_i) = g_i, \;\; \mbox{for}\;\; i=1,2,\dots,5,
\end{equation}
where the forward map $I$ is given in (\ref{paramtodata}) and $g_i$ are measurements. A simpler problem is one where the goal is to locate the center of the ellipse $(b_1,b_2)$ and its area $A=\pi a_1 a_2$. In this problem, the forward map is obtained after we drop the second order terms in the Taylor's expansion of $K(x_1,x_2)$ (\ref{twotaylor}). Therefore, we have
\begin{equation} \label{IPsimp}
\tilde{I}(b_1,b_2,A;\phi_i) = g_i \;\; \mbox{for}\;\; i=1,2,3,
\end{equation}
where $\tilde{I}(b_1,b_2,A;\phi)=A K(b_1,b_2,\phi)$. We will consider both problems.

\subsection{An alternate forward map using polarization tensors}
A different approach to relating anomalies to boundary potential perturbations was first proposed in \cite{cedio}.  It is based on an asymptotic expansion that exploits smallness of the anomaly, rather than linearization around small conductivity contrast.  For the problem at hand, we use the result in \cite{bruhl}, Theorem 2.1.  Writing the perturbed potential as $U_1(x)$ and the background potential as $U_0(x)$, we have
\begin{equation}\label{momenttensor}
    U_1(x) = -\epsilon\gamma \nabla_y N(b,x) \cdot M \nabla U_0(b),
\end{equation}
where as in above, $b=(b_1,b_2)$ locates the center of the ellipse. In (\ref{momenttensor}), $N(b,x)$ is the Neumann function and $U_0(x)$ is the background potential. Here $\epsilon$ is the size of the anomaly. For our purpose, we can set $\epsilon$ to 1 as long as the axes of the ellipse, $(a_1,a_2)$ are small. The geometric properties of the anomaly is encoded in the polarization tensor $M$ for an ellipse is given by
\[
M = (\pi a_1 a_2) Q^T \left[\begin{array}{cc}
     \frac{a_1+a_2}{a_1+(1+\gamma)a_2}& 0  \\
     0 & \frac{a_1+a_2}{a_2+(1+\gamma)a_1}
\end{array} \right] Q ,
\]
where 
\[
Q = \left[ \begin{array}{cc}
     \cos\xi & \sin\xi  \\
     -\sin\xi & \cos\xi  
\end{array} \right],
\]
and $\xi$ is the orientation of the ellipse as before.

The gradient of the Neumann function $N(x,z)$ in (\ref{momenttensor}) the case of the unit circle is given by \cite{bruhl}
\begin{equation}\label{gradneumann}
    \nabla N(b,x) = \frac{1}{\pi} \left[ \begin{array}{c}
         \frac{x_1 - b_1}{(x_1-b_1)^2 + (x_2-b_2)^2}\\
         \frac{x_2 - b_2}{(x_1-b_1)^2 + (x_2-b_2)^2} 
    \end{array} \right]
\end{equation}
where $|x|=1$. We wish to evaluate the tangential derivative of $U_1$ on the boundary. We write $x=(\cos\alpha,\sin\alpha)$ and directly calculate $\partial U_1/\partial \alpha$, which we will evaluate at $\alpha=\phi$.  Noting that only $\nabla N(b,x)$ depend on $\alpha$, we will work with (\ref{gradneumann}). However, by direct calculation 
\begin{equation*}
\left. \frac{\partial \nabla N(b,x)}{\partial \alpha} \right|_ {\alpha=\phi} =
\frac{1}{\pi} \left[ \begin{array}{c}
      \frac{(b_2^2-b_1^2+1)\sin \phi+2b_2(b_1\cos\phi-1)}{[(\cos\phi-b_1)^2+(\sin \phi-b_2)^2]^2}\\
      \frac{(b_1^2-b_2^2+1)\cos\phi+2b_1(b_2\sin\phi-1)}{[(\cos\phi-b_1)^2+(\sin\phi-b_2)^2]^2}
    \end{array} \right],
\end{equation*}
which is the same as $-\frac{1}{\pi}\nabla U_0(b)$. Thus, we have 
\begin{equation*}
    \frac{\partial U_1(1,\phi)}{\partial \alpha} = \frac{\epsilon\gamma}{\pi} (\nabla U_0(b))^{T} \cdot M \nabla U_0(b),
\end{equation*}
which agrees with \eqref{paramtodata}.

\section{Inversion for anomaly location and size}
Suppose we put a dipole at angle $\phi_i$ and measure the corresponding tangential derivative of $U_1$, namely $\frac{\partial U_1}{\partial \theta}(1,\phi_i) = g_i$. We do this for $i=1,2,3$, with $\phi_i$ distinct. The inverse problem is to determine the location $(b_1,b_2)$ and the area $A$ of the anomaly from the measurements. We use the forward map in (\ref{IPsimp}), so we have
\begin{equation} \label{IPSimp2}
    A K(b_1,b_2;\phi_i) = g_i, \;\;\mbox{for}\;\;i=1,2,3.
\end{equation}

\subsection{Problem geometry}
Define
\begin{equation}\label{Sidefine}
    S_i(b_1,b_2) = (b_1-\cos\phi_i)^2 + (b_2-\sin\phi_i)^2. 
\end{equation}
The function $S_i(b_1,b_2)$ is the squared distance from the anomaly center $(b_1,b_2)$ to the dipole located at $(\cos\phi_i,\sin\phi_i)$. We see that
\[
K(b_1,b_2;\phi_i) = \frac{1}{S_i(b_1,b_2)^2}. 
\]
From the measurement, we can calculate
\[
r_1 = \left[ \frac{g_3}{g_1} \right]^{1/2} =
\frac{S_1(b_1,b_2)}{S_3(b_1,b_2)},
\]
using the definition of $K(x_1,x_2)$. Expanding the above expression, we get
\begin{eqnarray*}
&\left[ b_1 - \frac{r_1\cos\phi_3-\cos\phi_1}{r_1-1} \right]^2
+ \left[ b_2 - \frac{r_1\sin\phi_3-\sin\phi_1}{r_1-1} \right]^2 \\
&=
\left[ \frac{r_1\cos\phi_3-\cos\phi_1}{r_1-1} \right]^2 
+ \left[ \frac{r_1\sin\phi_3-\sin\phi_1}{r_1-1} \right]^2 -1 
\end{eqnarray*}
We can identify the center of the circle $C_1$
\begin{eqnarray*}
    c_{1,1} &= \frac{r_1}{r_1-1} \cos\phi_3 - \frac{1}{r_1-1} \cos\phi_1, \\
    c_{1,2} &= \frac{r_1}{r_1-1} \sin\phi_3 - \frac{1}{r_1-1} \sin\phi_1.
\end{eqnarray*}
It can be seen from the above formula that $C_1=(c_{1,1},c_{1,2})$ is on the line connecting $(\cos\phi_1,\sin\phi_1)$ and $(\cos\phi_3,\sin\phi_3)$. Note that $0<r_1<\infty$. For $0<r_1<1$, $(c_{1,1},c_{1,2})$ is outside the unit circle and beyond $(\cos\phi_1,\sin\phi_1)$, and for $1<r_1<\infty$, it is beyond $(\cos\phi_3,\sin\phi_3)$.  Thus, the center cannot lie on the chord connecting the measurement points. The radius of the circle is
\[
\rho_1=\sqrt{c_{1,1}^2+c_{1,2}^2-1}.
\]
From the third measurement $g_2$, we calculate
\[
r_2=\left[ \frac{g_3}{g_2} \right]^{1/2} =
\frac{(b_2-\cos\phi_2)^2+(b_2-\sin\phi_2)^2}{(b_1-\cos\phi_3)^2+(b_2-\sin\phi_3)^2},
\]
This too forms a circle with center at $C_2=(c_{2,1},c_{2,2})$ with 
\begin{eqnarray*}
    c_{2,1} &= \frac{r_2}{r_2-1} \cos\phi_3 - \frac{1}{r_2-1} \cos\phi_2, \\
    c_{2,2} &= \frac{r_2}{r_2-1} \sin\phi_3 - \frac{1}{r_2-1} \sin\phi_2,
\end{eqnarray*}
and radius $\rho_3=\sqrt{c_{2,1}^2+c_{2,2}^2-1}$. The center of the ellipse, $(b_1,b_2)$ is located at the intersection of the circle with center at $C_1$ with radius $\rho_1$ and the circle with center at $C_2$ with radius $\rho_2$ that lies inside the unit circle.

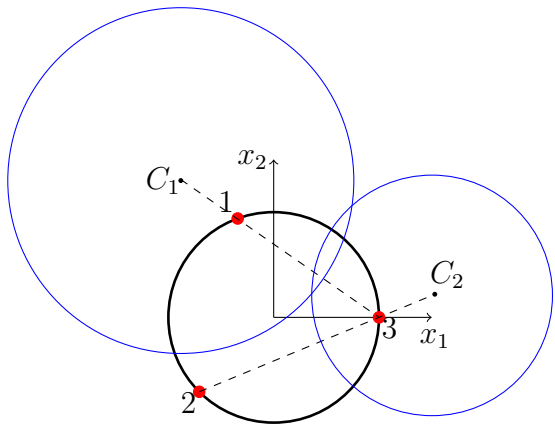
\begin{figure}
    \begin{center}
    \begin{tikzpicture}[scale=1.4]
\draw[line width=1pt] (0,0) circle (1);
\draw[->] (0,0) -- (1.5,0);
\draw[->] (0,0) -- (0,1.5);
%\draw[dotted] (0,0) -- (1.8794,0.6840);
%\draw[dotted] (0,0) -- (0.6840,1.894);
\draw[red,fill=red] (1,0) circle (.3ex);
\draw[red,fill=red] (-.707,-.707) circle (.3ex);
\draw[red,fill=red] (-.342,0.9397) circle (.3ex);
\draw[dashed] (-.707,-.707) -- (1.5042,0.2089);
\draw[dashed] (-.882,1.31) -- (1,0);
\draw[blue] (1.5042,0.2089) circle (1.1430);
\draw[blue] (-.882,1.3) circle (1.6430);
%\draw[green] (1.5042,0.2089) -- (.552,.832);
%\draw[green] (0,0) -- (1.5042,0.2089);
%\draw[green] (0,0) -- (.552,.832);
%\draw[blue,fill=blue] (0.6840,1.8794) circle (.5ex);
%\draw (2.2,0.7) node[]{$A$};
%\draw (0.6,2.2) node[]{$B$};
%\draw (1.1,0.25) node[]{$\phi$};
%\draw (0.7,0.65) node[]{$\Delta \phi$};
\draw (1.53,-0.2) node[]{$x_1$};
\draw (-0.2,1.5) node[]{$x_2$};
\draw (1.1,-.1) node[]{$3$};
\draw (-.807,-.807) node[]{$2$};
\draw (-.45,1.1) node[]{$1$};
%\draw (0,-.1) node[]{$O$};
\draw[fill=black] (1.53,.22) circle (0.1ex);
\draw (1.65,0.409) node[]{$C_2$};
\draw[fill=black] (-.882,1.3) circle (0.1ex);
\draw (-1.05,1.3) node[]{$C_1$};
%\draw (.552,.98) node[]{$P$};
%\filldraw[cyan, opacity = 0.5, rotate=45] (0.2,0.8) ellipse (0.6 and 0.37); 
\end{tikzpicture}
\end{center}
    \caption{The measurement points $(\cos\phi_1,\sin\phi_1)$, $(\cos\phi_2,\sin\phi_2)$ and $(\cos\phi_3,\sin\phi_3)$ are indicated by $1$, $2$, and $3$. The circle corresponding to the data $r_1$ is centered at $C_1$ with radius is $\rho_1$, and that corresponding to data $r_2$ is centered at $C_2$ with radius $\rho_2$. These are indicated in blue. The intersection of these two circles locate the ellipse center $(b_1,b_2)$.}% The intersection between the unit circle and this circle is $P$. The triangle $OPC$ is a right angled triangle because of the relationship between $\rho_2$ and $\overline{OC_2}$.}
    \label{fig:rightangle}
\end{figure}

There are some interesting observations:
\begin{itemize}
    \item When $b_1=0$, $\phi_1=0$ and $\phi_2=\pi$, then $I(\phi_2)\approx I(\phi_1)$ and we get a straight line $b_1=0$. Something similar occurs when $b_1=0$, $\phi_1=\pi/2$ and $\phi_2=-\pi/2$. Then $b_2=0$.
    \item The circles get bigger if the center of the ellipse $(b_1,b_2)$ is away from a measurement point.
    \item The radius grows linearly in the distance from the origin to the center of the ellipse.
\end{itemize}
Once the center of the ellipse is known, one can easily back out the area $A$ from the forward map.

\begin{remark}
    It would be desirable to have a way to characterize the range of the forward map. That is, given a data set, does there corresponds an ellipse whose location and size is consistent with the measured data? We were unable to find an easy characterization. However, it should be pointed that the constructive method described for calculating the center of the ellipse can be viewed as a way to characterize the range. If the data are out of range, the center of the ellipse would land outside the unit circle.
\end{remark}

\subsection{Unique determination of anomaly location and size}
\begin{theorem}
The anomaly location and size is uniquely determined by the data (\ref{IPsimp}) if $\phi_{i}-\phi_{j}\neq 2n\pi$ for $i\neq j$ and $b_{1}^2+b_{2}^2 \leq R < 1$ where $n$ is an integer.  
\end{theorem}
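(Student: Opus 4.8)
The plan is to establish uniqueness by showing that the constructive procedure described in the preceding subsection produces exactly one admissible point inside the disc. The key observation is that the three measurements $g_1, g_2, g_3$ determine two ratios $r_1 = (g_3/g_1)^{1/2}$ and $r_2 = (g_3/g_2)^{1/2}$, each of which constrains $(b_1,b_2)$ to lie on a specific circle (an Apollonius-type circle associated with the pair of measurement points). Uniqueness then reduces to showing that these two circles intersect the open unit disc in at most one common point, after which the area $A$ is recovered unambiguously from $A = g_3 \, S_3(b_1,b_2)^2$ (which is strictly positive since the center is interior, so $S_3 \neq 0$).

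\medskip

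\noindent First I would verify that the ratios are well-defined: since $b_1^2 + b_2^2 \le R < 1$ and each measurement point $(\cos\phi_i,\sin\phi_i)$ is on the unit circle, we have $S_i(b_1,b_2) > 0$ strictly for all $i$, hence $K(b_1,b_2;\phi_i) > 0$ and all $g_i$ are nonzero with consistent sign; the square roots are taken positive. Next I would record that the locus $S_1/S_3 = r_1$ is precisely the circle $C_1$ with center $(c_{1,1}, c_{1,2})$ and radius $\rho_1$ computed above, provided $r_1 \neq 1$; the condition $\phi_i - \phi_j \neq 2n\pi$ guarantees the two measurement points are distinct, so when $r_1 = 1$ the locus degenerates to the perpendicular bisector of the chord — still a single curve — and I would handle this as a limiting case. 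Similarly the locus $S_2/S_3 = r_2$ is the circle $C_2$ (or a line). The center $(b_1,b_2)$ must lie on $C_1 \cap C_2$.

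\medskip

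\noindent The main step is then to argue that $C_1 \cap C_2 \cap \{x_1^2 + x_2^2 < 1\}$ is a single point. Two distinct circles meet in at most two points, so I must rule out the case where both intersection points lie inside the unit disc. Here I would use the geometric fact established above: each of $C_1$ and $C_2$ has its center \emph{outside} the closed unit disc (the computation shows $c_{1,1}^2 + c_{1,2}^2 = \rho_1^2 + 1 > 1$, and likewise for $C_2$), and each is orthogonal to the unit circle (since $\rho_1^2 = |C_1|^2 - 1$ is exactly the power of the point $C_1$ with respect to the unit circle). Two circles each orthogonal to $\partial\Omega$ that pass through a common interior point $p$ must also pass through the image $p^* = p/|p|^2$ of $p$ under inversion in the unit circle — and $p^*$ lies \emph{outside} $\overline{\Omega}$ when $p$ is inside. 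Therefore the two intersection points of $C_1$ and $C_2$ are $p$ and $p^*$, exactly one of which is interior. This forces the interior intersection to be unique, giving a unique center, and then a unique area.

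\medskip

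\noindent \textbf{The hard part} will be making the degenerate cases rigorous and confirming the orthogonality claim cleanly. Specifically: (i) when $r_1 = 1$ or $r_2 = 1$ one of the loci is a line through the origin's complement rather than a circle, and the inversion argument must be adapted — a line perpendicular to a chord of the unit circle is its own image under the relevant symmetry only in special configurations, so I would instead note directly that such a line meets a circle orthogonal to $\partial\Omega$ in at most one interior point; (ii) I must check that $C_1$ and $C_2$ are genuinely distinct circles (not coincident), which follows because they are associated with different pairs of measurement points $\{\phi_1,\phi_3\}$ and $\{\phi_2,\phi_3\}$ and all three $\phi_i$ are distinct modulo $2\pi$ — if $C_1 = C_2$ one could derive $\phi_1 \equiv \phi_2$, a contradiction; and (iii) I should confirm that the hypothesis $R < 1$ (strict) is what prevents the anomaly center from sitting on $\partial\Omega$, where $S_i$ could vanish and the ratios blow up. Once these points are dispatched, uniqueness of both location and size follows immediately from the explicit reconstruction.
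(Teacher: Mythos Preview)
Your approach is correct but genuinely different from the paper's. The paper does \emph{not} use the Apollonius-circle construction for the uniqueness proof; instead it computes the Jacobian $D\mathcal{A}$ of the forward map $(A,b_1,b_2)\mapsto (AK(b_1,b_2;\phi_i))_{i=1,2,3}$ and shows, after an explicit expansion and trigonometric simplification, that
\[
\det(D\mathcal{A}) \;=\; -\frac{256\,A^2\,(b_1^2+b_2^2-1)}{(S_1 S_2 S_3)^3}\,
\sin\!\frac{\phi_1-\phi_2}{2}\sin\!\frac{\phi_2-\phi_3}{2}\sin\!\frac{\phi_3-\phi_1}{2},
\]
which is nonzero under the stated hypotheses. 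That is the whole argument: nonvanishing Jacobian, hence uniqueness.

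What each approach buys: the paper's route is a short, closed-form computation that also feeds directly into the stability and experiment-design analysis later on; its weakness is that a nowhere-vanishing Jacobian on a domain yields only \emph{local} injectivity, so as written the paper's proof has a gap for global uniqueness. Your inversion argument is more work in the degenerate cases, but it delivers honest global uniqueness: the key observation that $\rho_k^2 = |C_k|^2 - 1$ means each Apollonius circle is orthogonal to $\partial\Omega$, hence invariant under inversion $p\mapsto p/|p|^2$, so the two intersection points of $C_1$ and $C_2$ are an inversion pair with exactly one inside the disc. Your handling of $r_k=1$ can be simplified by noting that the perpendicular bisector of a chord of the unit circle passes through the origin and is therefore also inversion-invariant, so the same pairing argument applies verbatim. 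The distinctness check $C_1\neq C_2$ is the only point requiring care; one clean way is to observe that $C_1$ (or its limiting line) passes through both $(\cos\phi_1,\sin\phi_1)$ and $(\cos\phi_3,\sin\phi_3)$ only in the degenerate limit, and more directly that an Apollonius circle for foci $P_1,P_3$ coincides with one for foci $P_2,P_3$ only if $P_1=P_2$, contradicting the distinct-angle hypothesis.
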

\begin{proof}
Let us assume that we have chosen $\phi_i$ for $i=1,2,3$. The forward map $\mathcal{A}$ takes the parameters $(A,b_1,b_2)$ to measurements according to the left-hand side of (\ref{IPSimp2}). That is 
\begin{equation*}
\mathcal{A}(A,b_1,b_2) =    \left[ \begin{array}{c}
            AK(b_1,b_2,\phi_1)\\
            AK(b_1,b_2,\phi_2)\\
            AK(b_1,b_2,\phi_3)
            \end{array} \right] .
\end{equation*}
The Jacobian of the map can be shown to be
\begin{equation}
	\label{Jacobian3}
	D\mathcal{A} =
 \left[
	\begin{array}{ccc}
		\frac{1}{S_1(b_1,b_2)^2} & -4A\frac{b_1-\cos\phi_1}{S_1(b_1,b_2)^3} & -4A\frac{b_2-\sin\phi_1}{S_1(b_1,b_2)^3} \\
		\frac{1}{S_2(b_1,b_2)^2} & -4A\frac{b_1-\cos\phi_2}{S_2(b_1,b_2)^3} & -4A\frac{b_2-\sin\phi_2}{S_2(b_1,b_2)^3} \\
		\frac{1}{S_3(b_1,b_2)^2} & -4A\frac{b_1-\cos\phi_3}{S_3(b_1,b_2)^3} & -4A\frac{b_2-\sin\phi_3}{S_3(b_1,b_2)^3}
	\end{array} \right] .
\end{equation}
To prove uniqueness we will show that the determinant of the Jacobian is not zero. By direct calculation, we have
\begin{equation}
	\label{DetJacobian3}
\hspace{-0.4cm}	\det(D\mathcal{A})=\frac{64A^2}{(S_1(b_1,b_2)S_2(b_1,b_2)S_3(b_1,b_2))^3} \left|
	\begin{array}{ccc}
		S_1(b_1,b_2) & b_1-\cos\phi_1 & b_2-\sin\phi_1 \\
		S_2(b_1,b_2) & b_1-\cos\phi_2 & b_2-\sin\phi_2 \\
		S_3(b_1,b_2) & b_1-\cos\phi_3 & b_2-\sin\phi_3
	\end{array} \right| .
\end{equation}
Expanding further, we get
\begin{align*}
\negthinspace\det(D\mathcal{A}) & = \frac{64A^2(b_1^2+b_2^2-1)}{(S_1(b_1,b_2)S_2(b_1,b_2)S_3(b_1,b_2))^3}\left[\sin(\phi_1-\phi_2)-\sin(\phi_1-\phi_3)+\sin(\phi_2-\phi_3) \right] \\ 
& = -\frac{256A^2(b_1^2+b_2^2-1)}{(S_1(b_1,b_2)S_2(b_1,b_2)S_3(b_1,b_2))^3}\sin \frac{\phi_1-\phi_2}{2} \sin \frac{\phi_2-\phi_3}{2} \sin \frac{\phi_3-\phi_1}{2},
\end{align*} 
after employing trigonometric identities.  The numerator in the fraction is never zero, and $1-R <S_i(b_1,b_2)<2-R$. Thus, as long as the angles $\phi_i$ are distinct, the determinant never vanishes. 
\end{proof}
Indeed, measurements from any three distinct $\phi_i$ are all that is needed to uniquely determine the location and size of the conductivity anomaly.

\subsection{Stability}

We investigate the stability of determining the center of the anomaly from measurements. For this purpose, we define the forward map $\tilde{\mathcal{A}}$ based on the geometry of the problem
\begin{equation}
\tilde{\mathcal{A}}(b_1,b_2) = 
\left[ 
\begin{array}{c}
\frac{S_1(b_1,b_2)}{S_3(b_1,b_2)} \\
\frac{S_2(b_1,b_2)}{S_3(b_1,b_2)}
\end{array}
\right] .
\end{equation}
The Jacobian matrix is
\begin{equation}
	\label{Jacobian2}
	D\tilde{\mathcal{A}}=
	\begin{pmatrix}
		-\frac{J_1}{S_3^2} & -\frac{J_2}{S_3^2}\\
		-\frac{J_3}{S_3^2} & -\frac{J_4}{S_3^2}
	\end{pmatrix}=
    -\frac{1}{S_3^2}
    \begin{pmatrix}
		J_1 & J_2\\
		J_3 & J_4
	\end{pmatrix}
\end{equation}
where
\begin{align*}
		&J_1:=-2(b_1-\cos\phi_1) S_3(b_1,b_2)+2(b_1-\cos\phi_3) S_1(b_1,b_2),\\ 
		&J_2:=-2(b_2-\sin\phi_1) S_3(b_1,b_2)+2(b_2-\sin\phi_3) S_1(b_1,b_2),\\
		&J_3:=-2(b_1-\cos\phi_2) S_3(b_1,b_2)+2(b_1-\cos\phi_3) S_2(b_1,b_2),\\ 
		&J_4:=-2(b_2-\sin\phi_2) S_3(b_1,b_2)+2(b_2-\sin\phi_3) S_2(b_1,b_2).
\end{align*}
The inverse of the Jacobian matrix is
\begin{equation}\label{Jacobian2inv}
    D \tilde{\mathcal{A}} ^ {-1} = -\frac{S_3^2}{J_1J_4-J_2J_3}
    \begin{pmatrix}
        J_4 & -J_2\\
        -J_3 & J_1
    \end{pmatrix} .
\end{equation}

Next, we examine the denominator in \eqref{Jacobian2inv}. To simplify matters, we set, without loss of generality, $b_2=0$. Then we have
\begin{align*}
		J_1J_4-J_2J_3 = 8b_1(b_1^2-1)[&\sin\phi_1-\sin\phi_2-\sin\phi_3\cos(\phi_1-\phi_3)+\sin\phi_3\cos(\phi_2-\phi_3) \\
        &-\sin(\phi_1-\phi_2)\cos\phi_3)] \\
		+4(b_1^4-1)[&\sin(\phi_1-\phi_2)+\sin(\phi_2-\phi_3)+\sin(\phi_3-\phi_1)].
\end{align*}
Further simplification shows that
\begin{align*}
    J_1J_4-J_2J_3
    &=-16(b_1^2-1)\sin\frac{\phi_1-\phi_2}{2} \sin\frac{\phi_2-\phi_3}{2} \sin\frac{\phi_3-\phi_1}{2}(b_1^2-2b_1\cos\phi_3+1)\\
    &= 16(1-b_1^2)\sin\frac{\phi_1-\phi_2}{2} \sin\frac{\phi_2-\phi_3}{2} \sin\frac{\phi_3-\phi_1}{2}S_3.
\end{align*}

%We can factor out $4(b_1^2-1)$ so we need only be concerned with
%\begin{align*}
%    &2 b_1 (\sin\phi_1-\sin\phi_2-\sin\phi_3\cos(\phi_1-\phi_3)+\sin\phi_3\cos(\phi_2-\phi_3)-\sin(\phi_1-\phi_2)\cos\phi_3)+\\
%		&(b_1^2+1)(\sin(\phi_1-\phi_2)+\sin(\phi_2-\phi_3)+\sin(\phi_3-\phi_1)).
%\end{align*}
%The discriminant of the quadratic for $b_1$ simplifies to
%\[
%\left[ 64 \sin^2(\phi_3/2-\pi/4) \sin^2(\phi_3/2+\pi/4) - 16\right] \sin^2(\frac{\phi_1}{2}-\frac{\phi_2}{2})\sin^2(\frac{\phi_1}{2}-\frac{\phi_3}{2})\sin^2(\frac{\phi_2}{2}-\frac{\phi_3}{2})
%\]
%Square bracketed term is
%\[
%8 (\cos(2\phi_3) - 1)=-16\sin^2(\phi_3)
%\]
%Notice that even though the discriminant reaches 0 when $\phi_3 = 0,\pi$, the quadratic function of $b_1$ reaches the extreme value 0 when $b_1=1,-1$ respectively.

%We can simplify the coefficient as follows,
%\[
%    \begin{aligned}
%       &\sin\phi_1-\sin\phi_2-\sin\phi_3\cos(\phi_1-\phi_3)+\sin\phi_3\cos(\phi_2-\phi_3)-\sin(\phi_1-\phi_2)\cos\phi_3 \\
%        &= 4\cos\phi_3\sin\frac{\phi_1-\phi_2}{2} \sin\frac{\phi_2-\phi_3}{2} \sin\frac{\phi_3-\phi_1}{2},\\
%        &\sin(\phi_1-\phi_2)+\sin(\phi_2-\phi_3)+\sin(\phi_3-\phi_1)\\
%        &=-4\sin\frac{\phi_1-\phi_2}{2} \sin\frac{\phi_2-\phi_3}{2} \sin\frac{\phi_3-\phi_1}{2}.
%    \end{aligned}
%\]
%Further more, 

From \eqref{Jacobian2inv}, we have
\[
\| D \tilde{\mathcal{A}}^{-1} \|_1=\frac{1}{|J_1J_4-J_2J_3|} S_3^2 \max\{(|J_4|+|J_3|), (|J_2|+|J_1|)\}.
\]
Recall from the definition of $J_i$ that
\begin{align*}
    |J_4| + |J_3| = |-2(b_2-\sin\phi_2)&S_3 + 2(b_2-\sin\phi_3)S_2| \\
                    &+|-2(b_1-\cos\phi_2)S_3 + 2(b_1-\cos\phi_3)S_2| .
\end{align*}
We can estimate the left-hand side with
\begin{align*}
    |J_4| + |J_3| \leq 2(|b_2-\sin\phi_2| &+ |b_1-\cos\phi_2| )S_3 \\
                    &+ 2(|b_2-\sin\phi_3| + |b_1-\cos\phi_3| )S_2 .
\end{align*}
By Cauchy-Schwarz inequality and the definition of $S_i(b_1,b_2)$
\begin{equation*}
    |J_4| + |J_3| \leq 2\sqrt{2}(S_3\sqrt{S_2} + S_2\sqrt{S_3}).
\end{equation*}
Similarly, $|J_2|+|J_1|\leq 2\sqrt{2}(S_1\sqrt{S_3}+ S_3 \sqrt{S_1})$. These lead to an estimate on the norm of the inverse of the Jacobian
\[
    \|D \tilde{\mathcal{A}}^{-1} \|_1 \leq \frac{2\sqrt{2}S_3^{5/2} }{|J_1 J_4 - J_2 J_3|} \max
    \left\{ S_1+ \sqrt{S_1 S_3}, S_2 + \sqrt{S_2 S_3}\right\} .
\]
Using the formula we obtained for the denominator, we arrive at
\begin{theorem}
With the center of the ellipse at $(b_1,b_2)$ and dipole measurements taken at $\phi_j$, $j=1,2,3$, then the inverse of the Jacobian satisfies
\begin{align}\label{inv_est}
    \| D \tilde{\mathcal{A}}^{-1}\|_1 \leq \frac{2\sqrt{2}}{(1-b_1^2) \left|\sin\frac{\phi_1-\phi_2}{2} \sin\frac{\phi_2-\phi_3}{2} \sin\frac{\phi_3-\phi_1}{2}\right|} & \times \nonumber\\
    & \hspace{-3cm} S_3^{3/2} \; \max \left\{  S_1+ \sqrt{S_1 S_3}, S_2 + \sqrt{S_2 S_3}  \right\} .
\end{align}
\end{theorem}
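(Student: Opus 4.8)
\emph{Proof plan.} The estimate is an assembly of formulas already in hand, carried out in the right order; the only genuine work is a single trigonometric identity. First I would record the exact value of the induced $1$-norm of the inverse Jacobian. Since the $1$-norm of a $2\times2$ matrix is the larger of its two absolute column sums, reading off \eqref{Jacobian2inv} directly gives
\[
\|D\tilde{\mathcal{A}}^{-1}\|_1 \;=\; \frac{S_3^2}{|J_1J_4-J_2J_3|}\,\max\bigl\{\,|J_3|+|J_4|,\ |J_1|+|J_2|\,\bigr\},
\]
with all $S_i,J_i$ evaluated at $(b_1,b_2)$. Every subsequent step just refines one of the two factors on the right.

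For the numerator I would apply the triangle inequality to each $J_i$ (discarding the signs of $b_1-\cos\phi_k$ and $b_2-\sin\phi_k$), and then the Cauchy--Schwarz bound $|b_1-\cos\phi_k|+|b_2-\sin\phi_k|\le\sqrt2\,S_k^{1/2}$. This yields $|J_3|+|J_4|\le 2\sqrt2\,(S_3\sqrt{S_2}+S_2\sqrt{S_3})$ and, symmetrically, $|J_1|+|J_2|\le 2\sqrt2\,(S_3\sqrt{S_1}+S_1\sqrt{S_3})$; pulling a common $\sqrt{S_3}$ out of each rewrites them as $2\sqrt2\,\sqrt{S_3}\,(S_2+\sqrt{S_2S_3})$ and $2\sqrt2\,\sqrt{S_3}\,(S_1+\sqrt{S_1S_3})$. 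This is the only step where an inequality rather than an identity enters, and it is what produces the constant factor and the $\max$ appearing in \eqref{inv_est}.

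The substantive step is the exact evaluation of $J_1J_4-J_2J_3$. I would first observe that it suffices to treat $b_2=0$: the pairs $(J_1,J_2)$ and $(J_3,J_4)$ transform as vectors under a rotation of the plane that rotates every $\phi_k$ by the same angle --- an operation that leaves all $S_k$ and all differences $\phi_i-\phi_j$ fixed --- so $J_1J_4-J_2J_3$, being the determinant of the array with these two rows, is unchanged, which is the normalization already used in the text. With $b_2=0$ one expands the four $J_i$, multiplies out, and collects the result into a part proportional to $b_1(b_1^2-1)$ and a part proportional to $b_1^4-1$, each multiplying a trigonometric polynomial in $\phi_1,\phi_2,\phi_3$; repeated product-to-sum and sum-to-product reductions, in particular the identity $\sin(\phi_1-\phi_2)+\sin(\phi_2-\phi_3)+\sin(\phi_3-\phi_1)=-4\sin\frac{\phi_1-\phi_2}{2}\sin\frac{\phi_2-\phi_3}{2}\sin\frac{\phi_3-\phi_1}{2}$ used already for Theorem~1, collapse it to
\[
J_1J_4-J_2J_3 \;=\; 16\,(1-b_1^2)\,\sin\tfrac{\phi_1-\phi_2}{2}\,\sin\tfrac{\phi_2-\phi_3}{2}\,\sin\tfrac{\phi_3-\phi_1}{2}\,S_3 .
\]
I expect this collapse to be the main obstacle: it is lengthy though mechanical, and a sign slip in one of the sum-to-product steps is the likeliest failure mode, so it is worth cross-checking against the already-verified determinant formula of Theorem~1, which carries the same trigonometric factor. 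As a byproduct this confirms that $D\tilde{\mathcal{A}}$ is invertible exactly when the $\phi_i$ are pairwise distinct modulo $2\pi$ and $b_1^2+b_2^2<1$, consistent with the uniqueness theorem.

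Finally I would substitute this denominator into the first display, cancel one power of $S_3$ against the $S_3^2\sqrt{S_3}$ accumulated in the numerator (leaving $S_3^{3/2}$), use $1-b_1^2>0$ to drop the absolute value there, and collect the remaining constants to reach \eqref{inv_est}.
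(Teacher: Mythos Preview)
Your proposal is correct and follows essentially the same route as the paper: compute the induced $1$-norm of \eqref{Jacobian2inv}, bound the column sums $|J_1|+|J_2|$ and $|J_3|+|J_4|$ via the triangle inequality and Cauchy--Schwarz, reduce to $b_2=0$, and collapse $J_1J_4-J_2J_3$ to $16(1-b_1^2)\sin\frac{\phi_1-\phi_2}{2}\sin\frac{\phi_2-\phi_3}{2}\sin\frac{\phi_3-\phi_1}{2}\,S_3$ using the same sum-to-product identity as in Theorem~1. The only cosmetic differences are that you treat the numerator before the denominator and that your rotation-invariance justification for setting $b_2=0$ is more explicit than the paper's bare ``without loss of generality''; the paper in turn writes out the intermediate two-term trigonometric expansion of $J_1J_4-J_2J_3$ before collapsing it.
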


\begin{remark}
The estimate provides some insights into the stability. There are two terms in \eqref{inv_est}. We start with the second term. Since $S_i(b_1,0)$ is the squared distance from dipole $i$ to the center of the anomaly, we can make this term small by placing the dipoles are close to the anomaly. However, to make the denominator in the first term large, we need to spread out the dipoles. Therefore, the two terms suggest that there is a tradeoff between closeness (measured by small $S_i$) and separateness (measured by the angular differences) of the measurement points.  The factor $S_3^{5/2}$ implies that we must place one electrode close to the anomaly. We will revisit this issue when we examine optimal experiment design in the next section.    
\end{remark}

\section{Optimal experiment design}
\subsection{Bayesian approach}
In (\ref{IPSimp2}), we have discovered the expression of the data with respect to the measurement angle and the parameters, that is
\begin{equation}
	\label{Voltage Expression}
	\begin{aligned}
		&\tilde{I}(A, b_1, b_2; \phi) = AK(b_1,b_2;\phi),\\
		&K(b_1,b_2;\phi) = \frac{1}{((\cos \phi - b_1)^2 + (\sin \phi - b_2)^2)^2}.
	\end{aligned}
\end{equation}
We now consider the problem in the Bayesian statistical framework. The measured data includes the normal noise $\varepsilon$,
\begin{equation*}
	\mathbf{y} = \tilde{I}(\mathbf{A, b_1, b_2}; \phi) + \varepsilon,\ \varepsilon \sim \mathcal{N}(0,\sigma ^2).
\end{equation*}
The prior distributions of the parameters are assumed to be normal
\begin{equation*}
	\begin{aligned}
		&\mathbf{A} \sim \mathcal{N}(\mu_{A}, \sigma_{A}^2),\\
		&\mathbf{b_1}\sim \mathcal{N}(\mu_{b_1}, \sigma_{b_1}^2),\\
		&\mathbf{b_2}\sim \mathcal{N}(\mu_{b_2}, \sigma_{b_2}^2).
	\end{aligned}
\end{equation*}
However, it should be noted that other prior distributions can be used. By taking measurements at $\Phi = (\phi_1, \phi_2, \phi_3)$, three realizations of $\mathbf{y}$ are obtained, denoted as $Y=(y_1, y_2, y_3)$. Our task is to determine measurement locations $\Phi$ whose associated posterior distributions of the parameters provide the most information. In other words, we want to choose $\Phi$ that leads to the least amount of uncertainty in the parameters.

In \cite{HUAN2013288}, a utility function was introduced to quantify the information content about the parameters for a given a design. The utility function uses the Kullback-Leibler divergence, the discrepancy between the prior and the posterior given data $Y$ to measure information gain
\begin{equation*}
	D_{KL}(p_{\mathbf{B|Y};\Phi}||p_{\mathbf{B}|\Phi})=\int_{\mathbb{R}^3} p_{\mathbf{B|Y};\Phi}(B|Y; \Phi)\ln\frac{ p_{\mathbf{B|Y};\Phi}(B|Y; \Phi)}{p_{\mathbf{B}|\Phi}(B|\Phi)}dB.
\end{equation*}
Denote $U(\Phi)$ as the utility of the design $\Phi$. Since the data $Y$ can only be accessed after we have conducted the experiment, we take the expectation of the KL-divergence with respect to $\mathbf{Y}$'s distribution
\begin{equation*}
	\begin{aligned}
		U(\Phi)&=\int_{\mathbb{R}^3}p_{\mathbf{Y}|\Phi}(Y|\Phi)dY\int_{\mathbb{R}^3}p_{\mathbf{B|Y};\Phi}(B|Y; \Phi)\ln\frac{ p_{\mathbf{B|Y};\Phi}(B|Y; \Phi)}{p_{\mathbf{B}|\Phi}(B|\Phi)}dB\\
		%&=\int_{\mathbb{R}^3}\int_{\mathbb{R}^3}p_{\mathbf{B,Y}|\Phi}(B,Y|\Phi)\ln\frac{p_{\mathbf{Y|B};\Phi}(Y|B;\Phi)p_{\mathbf{B}|\Phi}(B|\Phi)}{p_{\mathbf{Y}|\Phi}(Y|\Phi)p_{\mathbf{B}|\Phi}(B|\Phi)}dYdB\\
		%&=\int_{\mathbb{R}^3}\int_{\mathbb{R}^3}p_{\mathbf{Y|B};\Phi}(Y|B;\Phi)p_\mathbf{B}(B)\ln\frac{p_{\mathbf{Y|B};\Phi}(Y|B;\Phi)}{p_{\mathbf{Y}|\Phi}(Y|\Phi)}dYdB\\
		&=E \left[\ln\frac{p_{\mathbf{Y|B};\Phi}(\mathbf{Y|B};\Phi)}{p_{\mathbf{Y}|\Phi}(\mathbf{Y}|\Phi)}|\Phi\right] .	\end{aligned}
\end{equation*}
The optimal design is found by maximizing the utility function $U(\Phi)$ over $\Phi$.

In computation, the expectation is approximated with Monte Carlo simulation
\[
U(\Phi)\approx \frac{1}{n_{out}}\sum_{i=1}^{n_{out}}\ln\frac{p_{\mathbf{Y|B};\Phi}(Y_i|B_i;\Phi)}{p_{\mathbf{Y}|\Phi}(Y_i|\Phi)}.
\]
Here $B_i$ are sampled from $\mathbf{B}$ and $Y_i$ are sampled from $\mathbf{Y}|B_i$. The denominator in the logarithm cannot be accessed directly, so an inner simulation is needed.
\[
\begin{aligned}
	p_{\mathbf{Y}|\Phi}(Y|\Phi) %&= \int_{\mathbb{R}^3} p_{\mathbf{Y|B};\Phi}(Y|B;\Phi)p_{\mathbf{B}|\Phi}(B|\Phi) dB\\
	%& = \int_{\mathbb{R}^3} p_{\mathbf{Y|B};\Phi}(Y|B;\Phi)p_{\mathbf{B}}(B) dB\\
	%& = E[p_{\mathbf{Y|B};\Phi}(Y|\mathbf{B};\Phi)]\\
	& \approx \frac{1}{n_{in}}\sum_{j=1}^{n_{in}} p_{\mathbf{Y|B};\Phi}(Y|B_j;\Phi).
\end{aligned}
\]
Similar as $B_i$, $B_j$ are samples from $\mathbf{B}$. We write the inner and outer simulation together.
\[
U(\Phi)\approx \frac{1}{n_{out}}\sum_{i=1}^{n_{out}}\left\{ \ln p_{\mathbf{Y|B};\Phi}(Y_i|B_i;\Phi)-\ln \left[\frac{1}{n_{in}}\sum_{j=1}^{n_{in}} p_{\mathbf{Y|B};\Phi}(Y_i|B_{ij};\Phi)\right] \right\}.
\]

\subsection{Numerical experiment with Bayesian approach.} 
Here we test the theory with simulations. We set the prior distributions of parameters as
\begin{equation*}
    \begin{aligned}
        &\mathbf{A}\sim\mathcal{N}(0.01, 0.005^2),\\
        &\mathbf{b}_1\sim\mathcal{N}(0.4, 0.2^2),\\
        &\mathbf{b}_2\sim\mathcal{N}(0.3, 0.2^2).
    \end{aligned}
\end{equation*}
The utility function is then maximized on the design space. For simplicity, we restrict our search to `symmetric designs'. That is, with the location $\phi_3$ as center, we constrain $\phi_1$ and $\phi_2$ by setting
\[
\psi := \phi_3 - \phi_1 = \phi_2 - \phi_3.
\]
This reduces the search space $(\phi_3,\psi) \in [0, 2\pi]\times[0,\pi]$. We discretize the space and perform an exhaustive search in Figure \ref{Symmetric Utility}. In practical settings, the optimization problem can be solved numerically using techniques like Bayesian Optimization. The means of the prior for $b_1$, $b_2$, and $A$ are $0.4$, $0.3$, and $0.01$. The utility function has a maximum, and is periodic in $\phi$.

\begin{figure}
    \centering
    \includegraphics[width=3.5in]{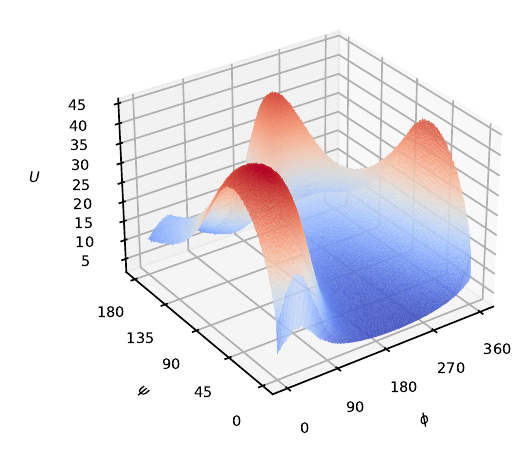}
    \caption{The surface shows the utility function of symmetric designs, where $\phi:=\phi_3$ and $\psi:=\phi_3-\phi_1 = \phi_2-\phi_3$. The maximum value of the utility function $U(\Phi)\approx45.0$ is reached when $\phi\approx36.3^\circ,\psi\approx44.1^\circ$. And the minimum value 2.4 is reached when $\phi\approx218.1^\circ, \psi\approx0.7^\circ$.}
    \label{Symmetric Utility}
\end{figure}

After finding the optimal and the worst experimental designs, we sample parameters from the prior distributions to obtain `ground-truth' parameters: $A=0.0107$, $b_1=0.2760$, $b_2=0.1874$. We simulate the measurement process and produce data from the two designs. We compute the posterior distributions for them. Figures \ref{best-worst} display the posterior distribution for the three parameters. It is clear that the best design produces tighter distributions.
% \begin{figure}\label{best-worst}
%     \centering
%     \includegraphics[width=0.7\linewidth]{pictures/mh sampling/Area sampling.pdf}
%     \includegraphics[width=0.7\linewidth]{pictures/mh sampling/b1 sampling.pdf}
%     \includegraphics[width=0.7\linewidth]{pictures/mh sampling/b2 sampling.pdf}
%     \caption{Posteriors of the parameters for optimal and worst experiments obtained by Metropolis-Hastings algorithm. The tightness of the distribution can be clearly seen.}
% \end{figure}
\begin{figure}\label{best-worst}
    \centering
    \setlength{\abovecaptionskip}{10pt}
    \subfigcapskip=-15pt
    \subfigbottomskip=-20pt
    \subfigure[]{\includegraphics[width=2.5in]{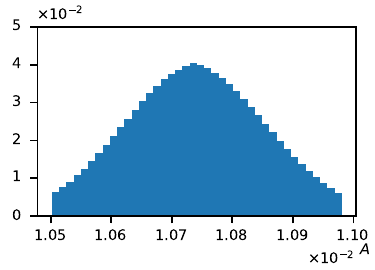}}
    \subfigure[]{\includegraphics[width=2.5in]{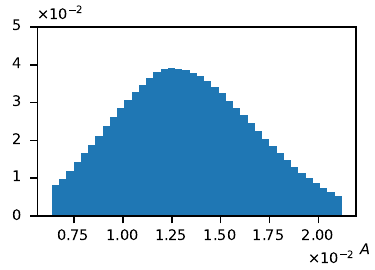}}
    \subfigure[]{\includegraphics[width=2.5in]{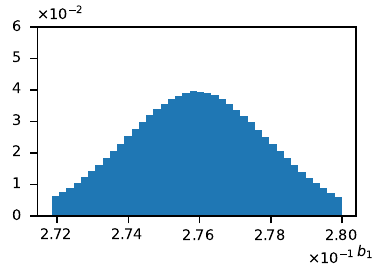}}
    \subfigure[]{\includegraphics[width=2.5in]{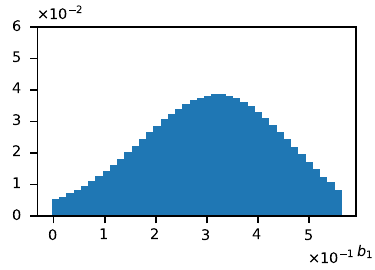}}
    \subfigure[]{\includegraphics[width=2.5in]{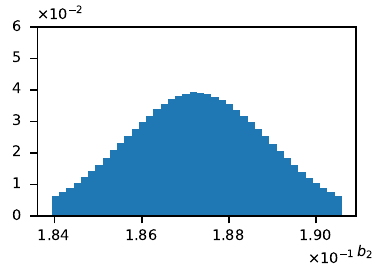}}
    \subfigure[]{\includegraphics[width=2.5in]{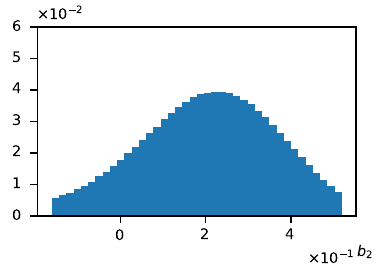}}
    \caption{Posteriors of the parameters for the optimal and the worst experiments obtained by Metropolis-Hastings algorithm. The left column is from the best design when $\phi=36.3^\circ, \psi=44.1^\circ$ and the right column is from the worst design when $\phi=218.1^\circ, \psi=0.7^\circ$. The three rows represent the posterior distribution of the area $A$, the $x$ coordinate $b_1$ and the $y$ coordinate $b_2$. The tightness of the distribution can be clearly seen.}
\end{figure}
For both experiments, we compute the sample mean and 95\% confidence bounds for the parameters, which we summarize below in Table \ref{confidencebounds}.

\begin{table}\label{confidencebounds}
\begin{center}
\begin{tabular}{|c|c|c|c|c|}
\hline\hline
Experiment & parameters & mean     & l.b.      & u.b \\ \hline \hline
           & $A$        & $0.0107$ & $0.0105$  & $0.0110$ \\ \cline{2-5}
Optimal    & $b_1$      & $0.2759$ & $0.2719$  & $0.2800$ \\ \cline{2-5}
           & $b_2$      & $0.1873$ & $0.1839$  & $0.1906$ \\ \hline \hline
           & $A$        & $0.0133$ & $0.0064$  & $0.0212$ \\ \cline{2-5}
Worst      & $b_1$      & $0.3014$ & $-0.0030$ & $0.5635$ \\ \cline{2-5}
           & $b_2$      & $0.2035$ & $-0.1605$ & $0.5198$ \\ \hline \hline 
\end{tabular}
\end{center}
\caption{Comparison of results for the optimal and the worst experiments. Shown are the sample means and 95\% confidence bounds on the three parameters. Ground truth: $A=0.0107$,$b_1=0.2760$, $b_2=0.1874$.}
\end{table}

\subsection{Deterministic approach}

In the deterministic approach for optimal experiment design, we consider the Jacobian in \eqref{Jacobian3}. Optimality is measured by how well-conditioned $D\mathcal{A}$ is for an a priori location $(b_1,b_2)$, and area $A$. To keep the numerical calculation consistent with the above example with the Bayesian approach, we set $b_1=0.4$, $b_2=0.3$, and $A=0.01$. We again consider a symmetric design, so the design space is $\phi_3 \in [0,2\pi]$, and $\psi\in [0,\pi]$ ($\psi=\phi_1-\phi_3$). A proxy for optimality is the reciprocal of the condition number. The larger this number, the more stable the reconstruction around the a prior values.

\begin{figure}
    \centering
    \includegraphics[width=3.5in]{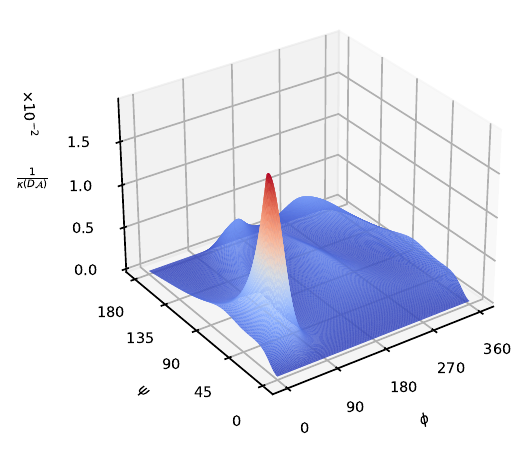}
    \caption{The reciprocal of the condition number as a function of $\phi$ and $\psi$ for $b_1=0.4$, $b_2=0.3$, and $A=0.01$. The maximum is located at $\phi\approx37^\circ$ and $\psi\approx36^\circ$. The surface plot should be compared to that in Figure \ref{Symmetric Utility}.}
    \label{deterministicdesign}
\end{figure}

Figure \ref{Symmetric Utility} and Figure \ref{deterministicdesign} shows some similarity. It should be pointed out that the assumptions in these two approaches are different. In the former, we assume we have priors, and the resulting utility are based on all possible priors. In the latter, the proxy for utility depends on the parameters themselves, which is counter to the nature of inverse problems.  However, one can exploit the easy computation of the utility.  One can start with an experiment design, estimate the parameters and the utility, based on these, update the experiment, i.e., move the electrode locations, until the maximum utility is as large as possible. This idea was explored in \cite{sanvog} in the context of imaging cracks using electrical impedance tomography.

\section{Determination of an elliptical anomaly}

We turn our attention to the problem of finding parameters of a small elliptical anomaly from measured dipole data. The forward map is given in \eqref{paramtodata} and the inverse problem is to find the ellipse center $(b_1,b_2)$, its semi-major and semi-minor axes $(a_1,a_2)$, and its orientation $\xi$ given data $g_i$ in
\[
I(b_1,b_2,a_1,a_2,\xi;\phi_i) = g_i, \;\;\mbox{for}\;\;i=1,2,\dots,5.
\]

\subsection{Jacobian}
We will start by examining the structure of the Jacobian of the forward map. The following calculations will reveal that the problem is highly ill-posed. The findings of this section informs the computational method which we present in the following subsection.

Recall the $I$ takes the form
\begin{equation}\label{system}
I(b_1,b_2,a_1,a_2,\xi;\phi_i) = K(b_1,b_2;\phi_i)(\pi a_1 a_2) 
+ M_{11}\frac{\pi}{8}a_1^3 a_2 + M_{22}\frac{\pi}{8}a_1 a_2^3 .
\end{equation}
We will write $M_{11}=K^2m_1$ and $M_{22}=K^2m_2$, where 
\begin{align*}
m_1=[20(b_1-\cos\phi)^2-4(b_2-\sin\phi)^2]\cos^2\xi &+[48(b_1-\cos\phi)(b_2-\sin\phi)] \cos \xi\sin \xi\\
&+[-4(b_1-\cos\phi)^2+20(b_2-\sin\phi)^2]\sin^2 \xi,
\end{align*}
and 
\begin{align*}
m_2=[20(b_1-\cos\phi)^2-4(b_2-\sin\phi)^2]\sin^2\xi&-[48(b_1-\cos\phi)(b_2-\sin\phi)] \cos\xi\sin\xi\\
&+ [-4(b_1-\cos\phi)^2+20(b_2-\sin\phi)^2]\cos^2\xi.
\end{align*}
Then the partial derivatives of \eqref{system}:
\begin{align*}
\dfrac{\partial I}{\partial b_1} = & \pi a_1 a_2 \frac{\partial K}{\partial b_1} + \frac{\pi}{4} K \frac{\partial K}{\partial b_1} m_1a_1^3a_2 + \frac{\pi}{8}K^2\frac{\partial m_1}{\partial b_1}a_1^3a_2 + \frac{\pi}{4}K\frac{\partial K}{\partial b_1}m_2a_1a_2^3 +\frac{\pi}{8}K^2\frac{\partial m_2}{\partial b_1}a_1a_2^3,\\
\dfrac{\partial I}{\partial b_2} = & \pi a_1 a_2 \frac{\partial K}{\partial b_2} + \frac{\pi}{4} K \frac{\partial K}{\partial b_2} m_1a_1^3a_2 + \frac{\pi}{8}K^2\frac{\partial m_1}{\partial b_2}a_1^3a_2 + \frac{\pi}{4}K\frac{\partial K}{\partial b_2}m_2a_1a_2^3 +\frac{\pi}{8}K^2\frac{\partial m_2}{\partial b_2}a_1a_2^3,\\
\dfrac{\partial I}{\partial a_1}= & \pi a_2K + \frac{3\pi}{8}K^2 m_1a_1^2a_2 + \frac{\pi}{8}K^2m_2a_2^3,\\
\dfrac{\partial I}{\partial a_2}=&\pi a_1K + \frac{\pi}{8}K^2 m_1a_1^3 + \frac{3\pi}{8}K^2m_2a_1a_2^2,\\
\dfrac{\partial I}{\partial \zeta}=&\frac{\pi}{8} K^2\frac{\partial m_1}{\partial \zeta}a_1^3a_2 + \frac{\pi}{8}K^2\frac{\partial m_2}{\partial \zeta}a_1a_2^3,~~~\zeta=\sin \xi .
\end{align*}
We use $A$ as our order parameter. Then $a_1$ and $a_2$ are $O(\sqrt{A})$. We see that to leading order 
\[
\frac{\partial I}{\partial b_1}, \frac{\partial I}{\partial b_2} \sim O(A), \;\; 
\frac{\partial I}{\partial a_1}, \frac{\partial I}{\partial a_2} \sim O(\sqrt{A}), \;\;
\frac{\partial I}{\partial \zeta} \sim O(A^2).
\]
The Jacobian matrix of the forward map will have columns whose magnitudes are 
\[
D\mathcal{A} = \left[ \begin{array}{ccccc}
O(A) & O(A) & O(\sqrt{A}) & O(\sqrt{A}) & O(A^2)
                      \end{array} \right] .
\]
Since $A$ is small, the Jacobian, while it may be invertible, is very ill-conditioned. Therefore, we expect the stability of the inverse problem for the five parameters of the elliptical anomaly to be poor.

\subsection{A Newton's method for anomaly determination}
From Section 3.4, we expect that the problem of determining the location and the size of the ellipse to be relatively well-posed, provided that the experiments are well designed. Therefore, we propose the following algorithm.
\begin{algorithm}
\caption{Newton's method}
\begin{algorithmic}
\State {\mbox Given data for 5 dipole locations: $g_j$, $j=1,\dots,5$}
\State {\mbox Choose 3 measurements to estimate $b_1$, $b_2$, $A$}\;
\State {\mbox Set calculated $(b_1,b_2)$, $a_1=a_2=\sqrt{A/\pi}$, and $\zeta\in [-1,1]$ as initial data}\;
\While{$E > \mbox{tol}$}
   \State $(b_1,b_2,a_1,a_2,\zeta) \gets (b_1,b_2,a_1,a_2,\zeta) - \alpha(D\mathcal{A})^\dagger ({\bf g} - {\bf I}(b_1,b_2,a_1,a_2,\zeta))$
   \State $E = \| {\bf g} - {\bf I}(b_1,b_2,a_1,a_2,\zeta) \|$
\EndWhile
\end{algorithmic}

\end{algorithm}
In the algorithm, since the Jacobian $D\mathcal{A}$ is ill-conditioned, we propose using the pseudo-inverse with a cut-off for the smallest singular value. The parameter $\alpha$ is the step size.  The vectors ${\bf g}$ and ${\bf I}$ are column vectors made up of the measurements at the five dipole locations. 

\subsection{Numerical experiments}
For the purpose of understanding the behavior of the proposed Newton's method for the recovery of the parameters of the elliptical anomaly, we perform some tests with simulated data.  Noise is added to the data to assess the sensitivity of the reconstruction to data errors.

In all the experiments, the dipoles are located at angles $(0,90^o,270^o,180^o,45^o)$. The ground truth values are $b_1=0.4$, $b_2=0.5$, $a_1=0.08$, $a_2=0.04$, and $\xi=45^o$. The forward map is computed using \eqref{system}. For noise, we add a mean-zero normally distributed random number with standard deviation that is a small multiple of $|I(b_1,b_2,a_1,a_2,\xi;\phi_i)|$. Thus, we are assuming that the perturbation from noise is proportional to the signal size. We will report the relative $\ell_1$-norm of the noise.

\begin{table}
    \centering
    \begin{tabular}{|c|c|c|c||c|c|c|c|c|} \hline \hline
    & \multicolumn{3}{|c||}{Step 1} & \multicolumn{5}{|c|}{Step 2}\\ \hline
    \mbox{noise} & $b_1$ & $b_2$ & $A$ & $b_1$ & $b_2$ & $a_1$ & $a_2$ & $\xi$ \\ \hline
$7.5\times 10^{-4}$ & 0.40 & 0.50 & 0.01 & 0.40 & 0.50 & 0.08 & 0.04 & $46^o$ \\ \hline
$1.5\times 10^{-3}$ & 0.40 & 0.50 & 0.01 & 0.40 & 0.50 & 0.09 & 0.04 & $52^o$ \\ \hline
$3.6\times 10^{-3}$ & 0.39 & 0.49 & 0.01 & 0.40 & 0.49 & 0.11 & 0.03 & $42^o$ \\ \hline
$1.2\times 10^{-2}$ & 0.40 & 0.50 & 0.01 & 0.40 & 0.50 & 0.12 & 0.03 & $15^o$ \\ \hline
$1.7\times 10^{-2}$ & 0.40 & 0.49 & 0.01 & 0.40 & 0.50 & 0.10 & 0.03 & $51^o$ \\ \hline\hline
    \end{tabular}
    \caption{Results from numerical experiments. The ground truth is $b_1=0.4$, $b_2=0.5$, $a_1=0.08$, $a_2=0.04$, and $\xi=45^o$. Step 1 is when data from $(0,90^o,270^o)$ are used to estimate the location and size of the anomaly.  This step is quite stable.  Using the values for $b_1$, $b_2$, and $a_1=a_2=\sqrt{A/\pi}$, we apply Newton's method as described in Algorithm 1. The results are reported in the columns under Step 2. The noise level in $\ell_1$ is reported in the first column.  We note that the noise level is quite low.}
\end{table}
The results of the numerical experiments are presented in Table 2. It is clear that the location and the area of the elliptical anomaly are well determined. Sensitivity in the determination of the parameters of the ellipse is higher, as predicted from the nature of the Jacobian matrix. In the experiments, we put a cutoff in the pseudo-inverse of $D\mathcal{A}$ -- we drop singular vectors corresponding singular values below $10^{-6}$.  With this cutoff, the algorithm may diverge depending on noise level. Higher cutoff will stabilize the algorithm at the cost of increased loss of accuracy.  We did not fully explore the interplay between noise level and the cutoff value.

\section{Discussion}
In this work, we examined electrical impedance tomography with minimal measurements. Of interest is the recovery of a small, elliptical conductivity anomaly with small conductivity contrast against a constant background. We show that while finding the location and size of the anomaly is quite stable, the determination of the axes of the ellipse and its orientation is numerically challenging. 

\begin{figure}
    \centering
    \includegraphics[width=3.5in]{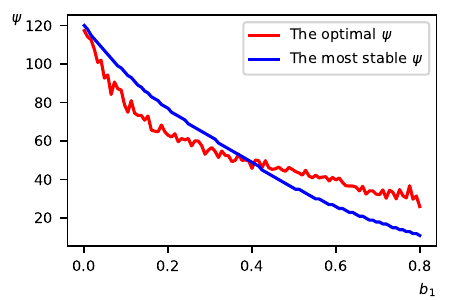}
    \caption{Optimal angle $\psi$ obtained by the Bayesian and deterministic approaches. Details of the numerical experiment are given in Sections 4.2 and 4.3.}
    \label{compare2}
\end{figure}

We also investigated the question of optimal experiment design in the context of Bayesian statistics. We find that if we have a prior on the location and size of the ellipse, we can find the optimal places on the boundary at which measurements should be made. A deterministic approach to optimal experiment design is explored. Here, the outcome depends on the location and size of the ellipse. If we have a prior, then we can use the means as input to find the optimal measurement locations. That the two approaches to optimal experiment design are roughly equivalent can be seen in Figure \ref{compare2}. We set $b_2=0$ and choose $\phi=0$. The only design parameter remaining is $\psi$. Shown in the figure are the angle $\psi$ from the Bayesian approach as a function of the mean $b_1$ in the prior and $\psi$ from the deterministic approach. While the difference is small, what is clear is that optimal experiment design has an important role to play in this problem.

\section*{Acknowledgements} Gaoming Chen's participation in this work was made possible by the Visiting Student Program at Johns Hopkins University. He thanks the many professors and graduate students who made his visit during the Fall of 2023 most enjoyable.

\section*{References}

\end{document}